\newcommand{\Z}{\ensuremath{\mathbb{Z}}}
\newcommand{\R}{\ensuremath{\mathbb{R}}}
\newcommand{\C}{\ensuremath{\mathbb{C}}}
\newcommand{\Q}{\ensuremath{\mathbb{Q}}}
\renewcommand{\S}{\ensuremath{{\mathcal S}}}
\newcommand{\M}{\ensuremath{{\mathcal M}}}
\newcommand{\Pp}{\ensuremath{{\mathcal P}}}
\newcommand{\Cc}{\ensuremath{{\mathcal C}}}
\newcommand{\Rr}{\ensuremath{{\mathcal R}}}
\newcommand{\aeq}{\Leftrightarrow}
\renewcommand{\rho}{\varrho}
\renewcommand{\epsilon}{\varepsilon}
\DeclareMathOperator{\id}{id}
\newtheorem{theorem}{Theorem}[section]
\newtheorem{lemma}[theorem]{Lemma}
\newtheorem{corollary}[theorem]{Corollary}
\newtheorem{definition}[theorem]{Definition}
\begin{document}

\title{Perfect colourings of cyclotomic integers}

\author{E.P. Bugarin}
\address{Fakult\"at f\"ur Mathematik, Universit\"at
  Bielefeld, 33501 Bielefeld, Germany}
\email{pbugarin@math.uni-bielefeld}

\author{M.L.A.N. de las Pe\~nas}
\address{Mathematics department, Ateneo de Manila University, Loyola
Heights, 1108 Quezon City, Philippines}
\email{mlp@math.admu.edu.ph}

\author{D. Frettl\"oh}
\address{Institut f\"ur Mathematik, FU Berlin, 14195 Berlin, Germany}
\email{dirk.frettloeh@udo.edu}

\maketitle

\begin{abstract}
Perfect colourings of the rings of cyclotomic integers with
class number one are studied. It is shown that all colourings induced
by ideals $(q)$ are chirally perfect, and vice versa. A necessary and
sufficient condition for a colouring to be perfect is obtained,
depending on the factorisation of $q$. This result yields the colour
symmetry group $H$ in general. Furthermore, the colour preserving group $K$ is
determined in all but finitely many cases. An application to
colourings of quasicrystals is given.
\end{abstract}

\section{Introduction}
\label{intro}
The study of colour symmetries of periodic patterns or point
lattices in two or three dimensions is a classical topic, see
\cite{gs} or \cite{schw}. A colour symmetry is a
symmetry of a coloured pattern up to permutation of colours; and
the study of colour symmetry groups is dedicated to the relation of
the colour symmetries of a coloured pattern to the
symmetries of the uncoloured pattern. During the last century, the
classification of colour symmetry groups of periodic patterns has been
carried out to a great extent. The discovery of quasiperiodic patterns
\cite{sen2} like the Penrose 
tiling raised the question about colour symmetries of these patterns.
Quasiperiodic patterns are not periodic, that is, the only translation
fixing the pattern is the trivial translation by 0. Nevertheless,
quasiperiodic patterns show a high degree of short and long range
order. One early approach to generalise the concept of colour symmetry
to quasiperiodic patterns was given in \cite{lip}. It used
the notion of indistinguishability of coloured patterns and the fact
that, for quasiperiodic patterns, it can be described in Fourier space
rather than in real space \cite{dm}. A more algebraic approach was
used in \cite{mp}, making use of quadratic number fields.
In this work, the problem of colour symmetries of both periodic and
non-periodic patterns, including the quasiperiodic cases, is addressed
by studying the sets of cyclotomic integers following the setting
introduced in \cite{b1}, \cite{bg}, \cite{bgs}. Cyclotomic integers 
turned out to
be very useful in describing symmetries of quasiperiodic patterns.

This article can be seen as a complement to
\cite{bg}, which concentrates on the combinatorial aspects of perfect
or chirally perfect colourings of
$\M_n$, where $\M_n = \Z[e^{2 \pi i / n}]$ denotes a $\Z$-module of
cyclotomic integers. (To be precise, they study so called `Bravais 
colourings', but we will show in the sequel that Bravais colourings and 
chirally perfect colourings are the same.)
In particular, the results in \cite{bg} yield the
numbers $\ell$ for which a (chirally) perfect colouring of $\M_n$ with
$\ell$ colours exists, given that $\M_n$ has class number one. In
contrast, this paper studies the algebraic properties of the colour
symmetry groups of perfect colourings of $\M_n$, again for the case
that $\M_n$ has class number one. In this sense this paper follows
the spirit of \cite{cbg}, where the group structure arising from of 
colourings (not necessariliy perfect ones) of periodic patterns are 
studied. Finally we should mention that the methods of this paper 
had been applied to a more detailed study of perfect colourings
of cyclotomic integers with $5$-fold, $8$-fold and $12$-fold
symmetry.

\section{Preliminaries}
\label{sec:1}
Let $\M_n$ denote the $n$-th ring of cyclotomic integers. That is,
$\M_n = \Z[\xi_n]$ is the ring of polynomials in $\xi_n$, where
$\xi_n=e^{2\pi i/n}$ always is a primitive complex root of unity. 
If it is clear from the context, we may write just $\xi$ instead of
$\xi_n$. 
Since $\M_{2n} = \M_n$ for $n$ odd, we omit the case $n \equiv 2 \mod
4$, for the sake of uniqueness. As mentioned above, our approach
requires that $\M_n$ has class number one. Then we can use the fact
that $\M_n$ is a principal ideal domain, and therefore also a unique
factorisation domain. This is only true for the following
values of $n$.
\begin{equation} \label{eq:cls1}
 n=3,4,5,7,8,9,11,12,13,15,16,17,19,20,21,24,25,27,28,32,33,
           35,36,40,44,45,48,60,84.
\end{equation}
Let us emphasise that $\M_n$ always denotes the ring of cyclotomic
integers for the values in Equation \eqref{eq:cls1} only.

{\bf Notation:} Throughout the text, $D_n$ (resp.\ $C_n$) denotes the
dihedral (resp.\ cyclic) group of order $2n$ (resp.\ $n$). The
symmetric group of order $n!$ is denoted by $\S_n$. Let $\xi_n =
e^{2 \pi i / n}$, a primitive $n$-th root of unity. The set of
cyclotomic integers $\Z[\xi_n]$ is denoted by $\M_n$.
The point group of $\M_n$ (the set of linear isometries fixing
$\M_n$) is the dihedral group
$D_N$, where $N=n$ if $n$ is even, and $N=2n$ if $n$ is odd.
The entire symmetry group $G(\M_n)$ of $\M_n$ is {\em symmorphic},
that is, it equals the semidirect product of its translation subgroup
with its point group: $G(\M_n) = \M_n \rtimes D_N$, where $N=n$ if $n$
is even, and $N=2n$ if $n$ is odd. If $H$ is a subgroup of some group
$G$, the index of $H$ in $G$ is denoted by $[G:H]$. Throughout the
text we will identify the Euclidean plane with the complex plane.
The complex norm of $z \in \C$ is always denoted by $|z|$, while the
algebraic norm of $z \in \M_n$ is denoted by $N_n(z)$.

The symmetry group of some set $X \subset \R^2$ is always
denoted by $G$ in the sequel. The following definitions are mainly
taken from \cite{gs}. 
A {\em colouring} of $X$ is a surjective map  $c: X \to \{1, \ldots,
\ell\}$. Whenever we want to emphasise that a
colouring uses $\ell$ colours, we will also call it an $\ell$-colouring.
The objects of interest are colourings where an element of $G$ acts as
a global permutation of the colours. Thus, for given $X 
\subset \R^2$ and a colouring $c$ of $X$, we consider the following group.
\begin{equation} 
 H = \{ h \in G \, | \, \exists \, \pi \in \S_{\ell} \;  \forall x \in
 X: \; c(h(x)) = \pi (c(x))\}. 
\end{equation}
The elements of $H$ are called {\em colour symmetries} of $X$. $H$ is
the {\em colour symmetry group} of the coloured pattern $(X,c)$. 
\begin{definition} \label{def:perfect}
A colouring $c$ of a point set $X$ is called {\em perfect}, if
$H=G$. It is called {\em chirally perfect}, if $H=G'$, where $G'$ is
the index 2 subgroup of $G$ containing the orientation preserving
isometries in $G$. 
\end{definition}
See Figure \ref{fig:bspcol} for some examples. 
By the requirement $\pi c = c h$, each $h$ determines a unique
permutation $\pi = \pi_h$. This also defines a map
\begin{equation} \label {eq:p}
P: H \to \S_{\ell}, \quad P(h):=\pi_h.
\end{equation}
Let $g,h\in H$. Because of $c (hg(x)) = ch(g(x)) = \pi_h c(g(x)) =
\pi_h (\pi_g (c(x))) = \pi_h \pi_g (c(x))$, we obtain the following
result. 
\begin{lemma}
$P$ is a group homomorphism. \hfill$\square$
\end{lemma}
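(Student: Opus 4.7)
The plan is to show that $P(hg) = P(h)P(g)$ for all $g,h \in H$, which is exactly the displayed calculation already begun in the excerpt. The argument has essentially two ingredients: well-definedness of the map $\pi \mapsto \pi_h$, and the compatibility of composition.

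First I would pause on well-definedness of $P$. For a given $h \in H$, the permutation $\pi_h$ is characterised by the identity $\pi_h(c(x)) = c(h(x))$ for all $x \in X$. Since $c$ is surjective onto $\{1,\ldots,\ell\}$, every colour $i$ arises as $c(x)$ for some $x$, and the value $\pi_h(i)$ is forced to equal $c(h(x))$. Hence $\pi_h$ is uniquely determined by $h$ (one still checks independence of the chosen preimage $x$, which follows because any two preimages $x,x'$ of the same colour must be mapped by $h$ to points of the same colour—otherwise no permutation $\pi$ satisfying the definition of $H$ could exist). Thus $P$ is a well-defined set-theoretic map.

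Next I would verify that $H$ is actually closed under composition (and inversion), so that the statement ``$P$ is a group homomorphism'' makes sense. Given $g,h \in H$, I compute for arbitrary $x \in X$,
\begin{equation*}
c\bigl((hg)(x)\bigr) = c\bigl(h(g(x))\bigr) = \pi_h\bigl(c(g(x))\bigr) = \pi_h\bigl(\pi_g(c(x))\bigr) = (\pi_h \pi_g)(c(x)).
\end{equation*}
This shows simultaneously that $hg \in H$ (with an explicit witness permutation $\pi_h \pi_g \in \S_\ell$) and that the witness permutation is $\pi_h \pi_g$. By the uniqueness established in the previous step, $\pi_{hg} = \pi_h \pi_g$, i.e.\ $P(hg) = P(h)P(g)$. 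A parallel argument for $h^{-1}$ (applying $\pi_h^{-1}$ to the defining identity) shows $\pi_{h^{-1}} = \pi_h^{-1}$, confirming that $H$ is a subgroup and $P$ respects inverses.

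There is no real obstacle here: the only subtle point is the uniqueness of $\pi_h$, which is a direct consequence of the surjectivity requirement built into the definition of a colouring. Once that is in place, the homomorphism property is just the associativity of function composition applied to the two commuting-square identities $ch = \pi_h c$ and $cg = \pi_g c$.
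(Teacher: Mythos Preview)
Your proof is correct and follows exactly the same approach as the paper: the paper's argument consists precisely of the calculation $c(hg(x)) = \pi_h\pi_g(c(x))$ that you reproduce, together with the remark that uniqueness of $\pi_h$ follows from the surjectivity of $c$. Your additional comments on well-definedness and closure under inversion are more careful than the paper, but they are just elaborations of the same idea.
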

A further object of interest is the subgroup $K$ of $H$ which fixes
the colours. For a given $X \subset \R^2$ and a colouring $c$ of $X$,
we consider the {\em colour preserving group} $K$ (in \cite{pbeff}
called colour {\em fixing} group): 
\begin{equation} 
K := \{ k \in H \, | \, c(k(x)) = c(x), \, x \in X \}.
\end{equation}
In other words, $K$ is the kernel of $P$.
The aim of this paper is to deduce the nature of the groups $H$ and
$K$ for (chirally) perfect colourings of $\M_n$.

\begin{figure}
\includegraphics[width=120mm]{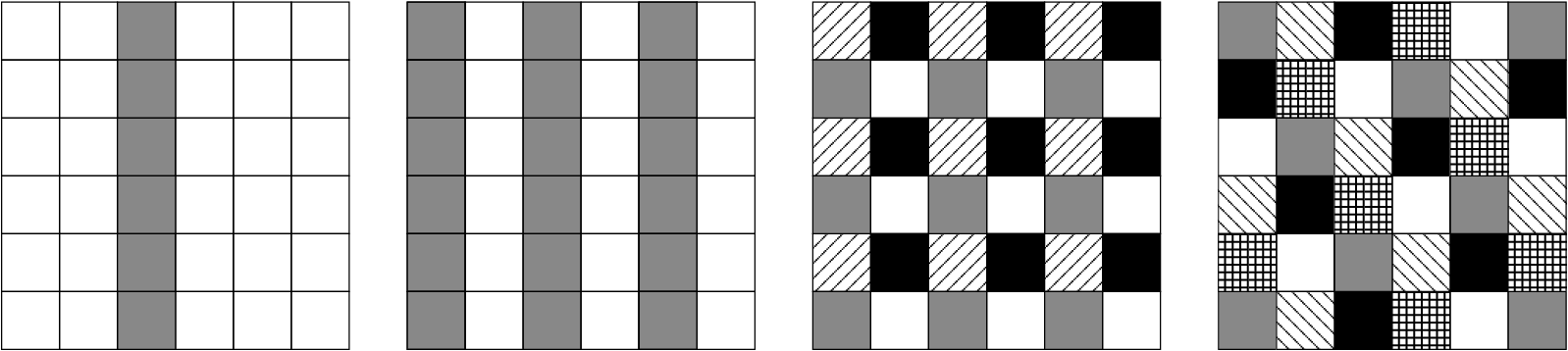}
\caption{Four examples of colourings of $\Z^2$. For clarity, each
  element of $\Z^2$ is replaced by a unit square. From left to right:
  An arbitrary 2-colouring with two colours, neither ideal nor perfect;
  a 2-colouring induced by a coset colouring, but neither ideal nor
  perfect; a perfect 4-colouring induced by the ideal $(2)$; a chirally
  perfect 5-colouring induced by the ideal $(2+i)$. \label{fig:bspcol}
}
\end{figure}

\section{Coset colourings and ideal colourings of planar modules}
\label{sec:2}
A colouring of a point set $X$ with a group structure (like a
lattice or a $\Z$-module) can be constructed by choosing a subgroup of
$X$ and assigning to each coset a different colour (\cite{vdw}, see
also \cite{mlp}).
Thus we will generate colourings of $\M_n$ by suitable
subgroups of $\M_n$. Since $\M_n$ is in fact a
principal ideal domain, we will choose principal ideals $(q)$ as these
subgroups. Each element $q \in \M_n$ thus generates a colouring in the
following way.

\begin{definition}
An {\em ideal colouring} of $\M_n$ with $\ell$ colours is defined as follows:
For each $z \in (q) = q \M_n$, let $c(z)=1$. Let
the other cosets of $(q)$ be $(q)+t_2, \ldots, (q) + t_{\ell}$.
For each $z \in (q)+t_i$, let $c(z)=i$.
\end{definition}

If $(q)$ is given explicitly, we will also call such an ideal colouring
a {\em colouring induced by} $(q)$.
We will see that all chirally perfect colourings of $\M_n$,
where $n$ is of class number one, arise from principal ideals $(q) = q
\M_n$, where $q \in \M_n$. Consequently, there exists a chirally
perfect colouring of $\M_n$ with $\ell$ colours, if and only if there
is $q$ such that $N_n(q) = [ \M_n : (q)] = \ell$. (Note that the index
of $(q)$ in $\M_n$ is just the algebraic norm of $q$.)

In \cite{bg}, the number of Bravais colourings of $\M_n$ was
obtained for all $n$ as in \eqref{eq:cls1}. Let us shortly explain, why
in this context Bravais colourings are chirally perfect colourings, and
vice versa. A {\em Bravais colouring} of $\M_n$ is a colouring where
each one-coloured subset is in the same
Bravais class as $\M_n$. In plain words, this means that each
one-coloured subset is similar to $\M_n$. More precisely: there is
$q \in \C$ such that for each $i$, $c^{-1}(i)$ is a translate of
$q \M_n$. For a general definition of Bravais class, see for instance
\cite{m}.

\begin{theorem}
Let $\M_n = \Z[\xi_n]$ be a principal ideal domain. A colouring of
$\M_n$  is a Bravais colouring, if and only if it is a chirally
perfect colouring, if and only if it is an ideal colouring.
\end{theorem}
\begin{proof}
Let $c$ be an ideal colouring induced by $(q)$.
Trivially, $(q) = q \M_n$ is similar to $\M_n$, and the cosets are
translates of $q \M_n$. Thus $c$ is a Bravais colouring.

Let $c$ be a Bravais colouring of $\M_n$. Without loss of generality,
let $0 \in c^{-1}(1)$. The set $c^{-1}(1)$ of points of colour 1 is
similar to $\M_n$, that is, it equals $q \M_n$ for some $q
\in \C$. Since $c^{-1}(1) \subset \M_n$, we have $q \M_n \subset
\M_n$, which implies $q \in \M_n$.  Thus $c^{-1}(1) = (q)$. All other
preimages $c^{-1}(i)$ are translates of $(q)$, thus cosets of $(q)$ in
$\M_n$. Therefore $c$ is an ideal colouring.

For the equivalence of chirally perfect colouring and ideal colouring,
see Theorem \ref{thm:bal} below.
\end{proof}

\section{The structure of $H$}
\label{sec:3}

Recall that $P: H \to \S_{\ell}$ maps a colour symmetry to the
permutation it induces on the colours, see \eqref{eq:p}.

\begin{lemma} \label{lem:h/k}
$H$ acts transitively on the coloured subsets of any perfect
colouring of $\M_n$, and $H/K \cong P(H)$.
\end{lemma}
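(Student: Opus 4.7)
The plan is to treat the two assertions separately; both follow directly from material already in place. For the transitivity claim I would exploit the fact that a perfect colouring satisfies $H=G=\M_n\rtimes D_N$, so every translation by an element of $\M_n$ already belongs to $H$. Given two colours $i$ and $j$, pick points $x_i\in c^{-1}(i)$ and $x_j\in c^{-1}(j)$ and consider the translation $t=x_j-x_i\in\M_n\subseteq H$. Since $t$ is a colour symmetry, the defining identity $c(z+t)=\pi_t(c(z))$ evaluated at $z=x_i$ gives $\pi_t(i)=j$. Hence for every $z\in c^{-1}(i)$ one has $z+t\in c^{-1}(j)$, and the same reasoning applied to $-t$ produces an inverse map, so $t$ induces a bijection $c^{-1}(i)\to c^{-1}(j)$. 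This is the required transitivity of the action of $H$ on the coloured subsets.

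For the isomorphism $H/K\cong P(H)$ I would invoke the first isomorphism theorem. The preceding lemma establishes that $P\colon H\to\S_\ell$ is a group homomorphism, and by definition the colour preserving group $K$ consists precisely of those $h\in H$ whose induced permutation $\pi_h$ is the identity permutation; that is, $K=\ker P$. Therefore $H/K\cong\mathrm{im}(P)=P(H)$.

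The argument is essentially bookkeeping, and I do not anticipate a genuine obstacle. The only point requiring slight care is the first step: one must verify that the permutation $\pi_t$ attached to a translation $t\in\M_n$ actually carries the label $i$ to the label $j$, but evaluating the colour-symmetry relation at a single well-chosen point delivers this immediately. Everything else is the first isomorphism theorem applied to $P$, for which the required homomorphism and kernel have already been identified in the preceding statements.
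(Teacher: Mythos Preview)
Your proof is correct and takes essentially the same approach as the paper: translations by elements of $\M_n$ lie in $H$ and act transitively on the colour classes, and the first isomorphism theorem applied to $P$ handles the second claim. The only difference is presentational: the paper defers the transitivity argument to a remark following Theorem~\ref{thm:bal} (phrasing it in terms of cosets of $(q)$), whereas your argument works directly from the colour-symmetry relation $c(z+t)=\pi_t(c(z))$ and is therefore self-contained at this point without the forward reference.
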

\begin{proof}
The proof of the first statement follows from the proof of Theorem
\ref{thm:bal} below, see the remark there.
Since $K = \mbox{ker}(P)$, the second claim is clear.
\end{proof}
This yields the short exact sequence
\begin{equation} \label{eq:exseq}
 0 \longrightarrow K \longrightarrow H \longrightarrow
 H/K \longrightarrow 0.
\end{equation}
Therefore, $H$ is always a group extension of $K$. In general, $H$ is
neither a direct nor a semidirect product of $K$ and $H/K$, see
Theorem \ref{thm:prod} below.

We proceed by examining how the factorisation of $q$ in $\M_n$ affects
the structure of the colour symmetry group $H$ of the colouring
induced by $(q)$. The unique factorisation of $q$ over $\M_n$ reads
\begin{equation} \label{eq:qfac}
 q = \varepsilon \prod_{p_i \in \Pp} p_i^{\alpha_i} \prod_{p_j \in \Cc}
 \omega_{p_j}^{\beta_j}  \overline{\omega_{p_j}}^{\gamma_j}
 \prod_{p_k \in \Rr} p_k^{\delta_k},
\end{equation}
where $\varepsilon$ is a unit in $\M_n$.
Here, $\Pp$ (resp.\ $\Cc$, resp.\ $\Rr$) denotes the set of inert (resp.\
complex splitting, resp.\ ramified) primes over $\M_n$. The generator
$q$ is called {\em balanced} if $\beta_j=\gamma_j$ for all $j$. In
other words: $q$ is balanced if it is of the form
\begin{equation} 
 q= \epsilon x p,
\end{equation}
where $\epsilon$ is a unit in $\M_n$, $x$ is a real number in $\M_n$
(i.e., $x \in \Z[\xi+\overline{\xi}]$), and $p$ is a
product of ramified primes. By the definition of a ramified prime $p$
(see \cite{wash}), $\overline{p} \in (p)$ holds in
$\M_n$. (Equivalently, $p / \overline{p}$ is a unit in $\M_n$.)
The following lemma is well-known, it is stated here for the convenience of
the reader.

\begin{lemma} \label{lem:unit}
All units $\epsilon$ in $\Z[\xi_n]$ are of the form $\epsilon=\pm \lambda
\xi_n^k$, where $\lambda \in \Z[\xi+\overline{\xi}]$.
\end{lemma}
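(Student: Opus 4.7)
I will follow the classical Kronecker approach. Set $\zeta := \epsilon / \overline{\epsilon}$; since $\overline{\epsilon}$ is also a unit in $\Z[\xi_n]$, so is $\zeta$, and visibly $|\zeta| = 1$. Because $\Q(\xi_n)/\Q$ is abelian, complex conjugation (which is itself the Galois automorphism $\xi_n \mapsto \xi_n^{-1}$) commutes with every $\sigma \in \mathrm{Gal}(\Q(\xi_n)/\Q)$; consequently
\[
 |\sigma(\zeta)| = |\sigma(\epsilon)/\sigma(\overline{\epsilon})| = |\sigma(\epsilon)/\overline{\sigma(\epsilon)}| = 1.
\]
Kronecker's theorem on algebraic integers all of whose Galois conjugates lie on the unit circle then forces $\zeta$ to be a root of unity in $\Z[\xi_n]$, say $\zeta = \pm \xi_n^j$ for some integer $j$.

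The second step is to write $\zeta = \eta^2$ for some root of unity $\eta \in \Z[\xi_n]$. For $n$ odd this is automatic, since $\gcd(2,n) = 1$ makes squaring a bijection on $\langle \xi_n \rangle$, so one can solve $2k \equiv j \pmod n$. For $n \equiv 0 \pmod 4$ (the only remaining case, as $n \equiv 2 \pmod 4$ is excluded throughout the paper), the subgroup $\mu^2 \subset \mu$ of squares of roots of unity has index $2$, and one must show $\zeta \in \mu^2$. I would argue this by reducing modulo a suitable prime $\mathfrak{p}$ of $\Z[\xi_n]$ above a rational prime dividing $n$: there $\xi_n$ reduces to $1$ (or to a root of unity of very small order), so $\epsilon \equiv \overline{\epsilon} \pmod{\mathfrak{p}}$, whence $\zeta \equiv 1 \pmod{\mathfrak{p}}$, and only elements of $\mu^2$ satisfy this congruence.

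Once $\zeta = \eta^2$ is established, set $\lambda := \epsilon/\eta$. Using $\overline{\eta} = \eta^{-1}$ and $\overline{\epsilon} = \epsilon/\zeta = \epsilon/\eta^2$, a direct computation gives
\[
 \overline{\lambda} = \overline{\epsilon}/\overline{\eta} = \overline{\epsilon}\cdot\eta = (\epsilon/\eta^2)\cdot\eta = \epsilon/\eta = \lambda,
\]
so $\lambda$ is fixed by complex conjugation and hence lies in $\Z[\xi + \overline{\xi}]$. Writing $\eta = \pm \xi_n^k$ yields $\epsilon = \pm \lambda \xi_n^k$, as asserted.

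The only delicate point is the second step: Kronecker's theorem places $\zeta$ only in the full cyclic group $\mu$ of roots of unity, and extracting the index-$2$ subgroup $\mu^2$ requires the reduction modulo $\mathfrak{p}$ (or an equivalent parity argument exploiting the excluded congruence class $n \equiv 2 \pmod 4$). Everything else is formal manipulation of units and roots of unity.
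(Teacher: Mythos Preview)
Your overall strategy---form $\zeta=\epsilon/\overline{\epsilon}$, apply Kronecker, then halve the exponent---is exactly the paper's (both follow Washington's Proposition~1.5), and your final verification that $\lambda:=\epsilon/\eta$ is real is cleaner than what the paper writes. But your second step has a genuine gap in the odd case. For $n$ odd the full group $\mu$ of roots of unity in $\Z[\xi_n]$ has order $2n$, not $n$: it is $\{\pm\xi_n^{k}\}$, and $-1\notin\langle\xi_n\rangle$. Squaring is indeed a bijection on the odd-order subgroup $\langle\xi_n\rangle$, but its image is only that subgroup, so $\mu^{2}=\langle\xi_n\rangle$ still has index~$2$ in $\mu$. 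Kronecker gives merely $\zeta=\pm\xi_n^{j}$, and the possibility $\zeta=-\xi_n^{j}$ is \emph{not} a square in $\mu$; ruling it out is precisely the delicate point, and it requires a congruence argument of the same kind you reserved for $n\equiv 0\pmod 4$ (for $n$ an odd prime power one reduces modulo $1-\xi_n$). So the odd case is not ``automatic''. Your sketch for $n\equiv 0\pmod 4$ is also too loose as stated: for instance when $n=4$ and $\mathfrak{p}=(1+i)$, every element of $\mu=\{\pm 1,\pm i\}$ reduces to $1$, so the congruence $\zeta\equiv 1$ does not by itself single out $\mu^{2}$.

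The paper organises this step differently: rather than first proving $\zeta\in\mu^{2}$, it writes $\epsilon^{2}=\epsilon\overline{\epsilon}\cdot\zeta$, takes square roots in $\C$ to obtain $\epsilon=\pm\lambda\,\xi_{2n}^{j}$ with $\lambda$ real, and then uses the containment $\epsilon\in\Z[\xi_n]$ to force the root-of-unity factor back into $\{\pm\xi_n^{k}\}$. In effect the membership $\epsilon\in\Z[\xi_n]$ is made to do the work that your reduction modulo $\mathfrak{p}$ is intended to do.
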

\begin{proof}
(Essentially \cite{wash}, Prop. 1.5:) Let $\epsilon$ be a unit, and let
$\alpha=\epsilon/
\overline{\epsilon}$. Since $\epsilon, \overline{\epsilon},
1/\overline{\epsilon} \in \M_n$, $\alpha$ is an algebraic integer.
Since complex conjugation commutes with any element of the Galois
group, for all algebraic conjugates $\alpha_i$ of $\alpha$ holds
$|\alpha_i|=1$.

From Lemma 1.6 of \cite{wash} then follows:
If for all algebraic conjugates $\alpha_i$
of $\alpha$ holds $|\alpha_i|=1$, and $\alpha$ is an algebraic
integer, then $\alpha$ is some root of unity, say, $\xi_r^q$. Since
$\alpha \in \M_n$, it is either an $n$-th root of unity, or a $2n$-th
root of unity, if $n$ is odd. In each case, $\alpha=\pm \xi_n^j$ for
some $j$. Then $\epsilon^2=\epsilon \overline{\epsilon} \alpha = |
\epsilon | \xi_n^j$, where $| \epsilon|$ is a real number, thus $|
\epsilon | \in \Z[\xi_n + \overline{\xi}_n]$. It follows
$ \epsilon = \sqrt{|\epsilon|} \xi^{j/2}_n = \pm \sqrt{|\epsilon|}
\xi^{j}_{2n} = \pm \lambda \xi^{j}_{2n}$, where $\lambda \in  \Z[\xi_n
+ \overline{\xi}_n]$. However, since $\epsilon \in \Z[\xi_n]$, it
can't be a proper $2n$-th complex root of unity. Thus  $\pm \lambda
\xi^{j}_{2n} =  \pm \lambda \xi^{k}_n$ for some $k$.
\end{proof}
In particular, if $\epsilon$ is a unit in $\Z[\xi_n]$ with $|\epsilon|
= 1$, then it is (up to sign) an $n$-th complex root of unity:
$\epsilon = \pm \xi_n^k$.
\begin{lemma} \label{lem:bal}
$\overline{q} \in (q)$ if and only if $q$ is balanced.
\end{lemma}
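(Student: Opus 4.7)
The plan is to apply unique factorization in $\M_n$ directly to the factorisation \eqref{eq:qfac}. The condition $\overline{q}\in(q)$ is just $q\mid\overline{q}$, which, by unique factorisation, holds if and only if, for every prime $p$ of $\M_n$ (taken up to associates), the $p$-adic valuation of $\overline{q}$ is at least that of $q$. So the task reduces to computing both valuations, and that in turn requires understanding how complex conjugation acts on each kind of prime.

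I would dispatch the three cases separately. An inert prime $p_i\in\Pp$ is a rational prime, hence $\overline{p_i}=p_i$, so it contributes the same exponent $\alpha_i$ to $q$ and to $\overline{q}$. For a ramified prime $p_k\in\Rr$, the defining property $\overline{p_k}\in(p_k)$ together with the fact that complex conjugation is a Galois automorphism of $\Q(\xi_n)/\Q$ (so $N_n(\overline{p_k})=N_n(p_k)$) forces $\overline{p_k}/p_k$ to be a unit; thus $p_k$ and $\overline{p_k}$ are associate, and again contribute equally. For a complex splitting prime, $\omega_{p_j}$ and $\overline{\omega_{p_j}}$ are \emph{non}-associate primes of $\M_n$, and complex conjugation simply swaps them.

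Combining these observations with \eqref{eq:qfac}, the exponent of $\omega_{p_j}$ in $\overline{q}$ is $\gamma_j$ while its exponent in $q$ is $\beta_j$, and symmetrically the exponent of $\overline{\omega_{p_j}}$ is $\beta_j$ in $\overline{q}$ versus $\gamma_j$ in $q$. The contributions from $\Pp$ and $\Rr$ match automatically and impose no constraint. Hence $q\mid\overline{q}$ is equivalent to $\beta_j\le\gamma_j$ and $\gamma_j\le\beta_j$ for every $j$, i.e.\ $\beta_j=\gamma_j$ for all $j$, which is exactly the balanced condition.

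The only step requiring real care is the non-associateness of $\omega_{p_j}$ and $\overline{\omega_{p_j}}$ for a split prime—without it, the swap argument would be vacuous. This, however, is the very feature that distinguishes split from ramified primes in the classification underlying \eqref{eq:qfac}, so it is built into the setup and poses no genuine obstacle.
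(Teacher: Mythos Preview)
Your proof is correct and follows essentially the same route as the paper's: both exploit unique factorisation in $\M_n$ together with the action of complex conjugation on the three types of primes appearing in \eqref{eq:qfac}. The only cosmetic difference is that the paper forms the quotient $r=q/\overline{q}$ and checks when it is a unit, whereas you compare $p$-adic valuations of $q$ and $\overline{q}$ directly---two equivalent phrasings of the same divisibility argument.
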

\begin{proof}
Let $q$ be as in \eqref{eq:qfac}. Consider $q/\overline{q}$. The inert
primes in numerator and denominator cancel each other. The
unit $\varepsilon$, as well as the factors of the ramified primes,
contribute a unit $\varepsilon' \in \M_n$. Thus
\[ r:= q/\overline{q} =  \varepsilon' \prod_{p_j \in \Cc}
\omega_{p_j}^{\beta_j-\gamma_j}
\overline{\omega_{p_j}}^{\gamma_j-\beta_j}. \]
If $q$ is balanced, then $\beta_j=\gamma_j$, thus $\overline{q} =
(\varepsilon')^{-1} q \in (q)$, since $\varepsilon'$ is a unit. If $q$
is not balanced, then $\beta_j-\gamma_j \ne 0$ for some $j$. Then, by
Lemma \ref{lem:unit}, the right hand side $r$ is not a unit, thus
$r^{-1} \notin \M_n$, and consequently $\overline{q} = r^{-1} q \notin
(q)$. 
\end{proof}

\begin{theorem} \label{thm:bal}
Let $\M_n=\Z[\xi_n]$ be a principal ideal domain.
\begin{enumerate}
\item Each chirally perfect colouring of $\M_n$ is an ideal colouring.
\item Each ideal colouring of $\M_n$ is chirally perfect.
\item The colouring $c$ induced by $(q)$ is perfect, if and only if $q$
  is balanced.
\end{enumerate}
Consequently, $H= \M_n \rtimes D_N$ in the latter case, and $H= \M_n
\rtimes C_N$ otherwise. ($N=2n$ if $n$ is odd, $N=n$ else.)
\end{theorem}

\begin{proof}
Let $(q)$ be the ideal inducing the colouring of $\M_n$. Let
$\ell=[\M_n:(q)]$, and denote the cosets of $(q)$ by $(q)+t_1, \ldots,
(q)+t_{\ell}$, where $t_1=0$ for convenience. (The notation
$(q)+g$ rather than $g(q)$ or $(q) g$ is justified as follows:
all rotations and reflections in $G$ fix $(q)$. Only maps with
some translational part map $(q)$ to a coset different from $(q)$.)

We proceed by studying whether an element $g \in G$ maps an entire
coset $(q)+t_i$ to an entire coset $(q)+t_j$ or not. If yes, then $g$
induces a global permutation of the colours, and $g \in H$. Three
cases have to be considered.

1. Let $g \in G$ be a translation. Then $g$ is of the form
$g(x)=x+t$ for some $t$. Hence $g((q)+t_i)=(q)+t_i+t$ trivially is a
coset of $(q)$.

2. Let $g \in G$ be a rotation. Then $g((q))=(q)$, thus $g((q)+t_j)=(q) +
g(t_i)$, which is again a coset of $(q)$.

So, the first two cases are not critical, whether $q$ is balanced or
not. In particular, all orientation preserving isometries map entire cosets to entire
cosets, which proves part (2) of Theorem \ref{thm:bal}.

3. Let $g \in G$ be the reflection $x \mapsto \overline{x}$. If $q$ is
   balanced, then, by Lemma \ref{lem:bal}, $g(q) = \overline{q} \in
   (q)$, hence $g((q))=(q)$. Thus $g((q)+t_j) = (q) + g(t_j)$, which is a
   coset of $(q)$. Any element of $G$ is a composition of the three
   symmetries above, hence the `if'-part of Theorem \ref{thm:bal} (3) follows.

If $q$ is not balanced, then $g(0)=0 \in (q)$, but $g(q) \notin (q)$ by
Lemma \ref{lem:bal}. Thus $g$ does not map entire cosets to entire
cosets. Consequently, no reflection in $G$ maps entire cosets to
entire cosets. The reflections in $G$ are the only elements which fail
to do so. This (again) shows Theorem \ref{thm:bal} (2), and the `only-if' 
part of Theorem \ref{thm:bal} (3).

Regarding Theorem \ref{thm:bal} (1): Let $c$ be a chirally perfect
colouring. Let $0 \in c^{-1}(1)$. Then $c^{-1}(1)$ is invariant and
closed under rotations in $C_N$, and under translations by $t \in
c^{-1}(1)$. It follows that $c^{-1}(1)$ is invariant under
multiplication by elements of $\M_n$, and under translations by
elements of $c^{-1}(1)$. Thus $c^{-1}(1)$ is an ideal in $\M_n$.
\end{proof}
This proves Lemma \ref{lem:h/k} as well: $H$ contains all translations
$z \mapsto z+t, \; t \in \M_n$. Clearly, these translations act
transitively on the cosets.

\begin{corollary} \label{cor:one}
If there exists only one ideal colouring of $\M_n$ with $\ell$
colours, the colouring is perfect.
\end{corollary}
\begin{proof}
If an ideal colouring induced by $(q)$ is not perfect, then $q$ is not
balanced by Theorem \ref{thm:bal}. Thus, $\overline{q} \notin (q)$ by
Lemma \ref{lem:bal}, hence $(q) \ne (\overline{q})$. So $(q)$ and
$(\overline{q})$ define two different colourings with $\ell$ colours.
\end{proof}
Now we get immediately a result on colourings of $\Z^2$. This is
Theorem 8.7.1 in \cite{gs}, see also \cite{sen}. Note that the number of colours $\ell$ is
the norm $N_4(q)$ of $q$, which is just $q\overline{q}$.
\begin{corollary} \label{cor:z2}
Let $c$ be an $\ell$--colouring of the square lattice $\Z[i]$ generated by
$(q)$,  $q \in \Z[i]$.
\begin{enumerate}
\item If the factorisation of $\ell$ over $\Z$ contains no primes
  $p \equiv 1 \mod 4$, then the colouring is perfect.
\item If $q=m$, or $q=im$, or $q=(1 \pm i)^k m$ for some $m, k \in \Z
\setminus \{0\}$, then the colouring is perfect.
\item Otherwise the colouring is not perfect but chirally perfect, and
  so $H = \M_n \rtimes C_N$. 
\end{enumerate}
\end{corollary}
\begin{proof}
The inert primes in $\M_4=\Z[i]$ are exactly the ones of the form $p
\equiv 3 \mod 4$; and the splitting primes are exactly those of the
form $p \equiv 1 \mod 4$. The only ramified prime in $\M_4$ is
$2=(1+i)(1-i)$. So (1) and (2) of Corollary \ref{cor:z2} cover exactly
the cases where 
$q$ is balanced, and the claim follows from Theorem \ref{thm:bal}.
\end{proof}

Corollary \ref{cor:z2} tells us that all ideal colourings of the
square lattice with $1,2,4,8,9,16$ or $18$ colours are perfect, and
all those with $5,10,13,17,20$ colours are not. (These are all
possible values for $\ell<25$, see \cite{bg}). The first ambiguity
occurs at the value $\ell=25$: the three possible generators are
$q=5, \; q=3+4i,\; q=3-4i$. The first one induces a perfect
colouring, whereas the other two induce non-perfect but chirally
perfect colourings.

\begin{theorem} \label{thm:prod}
If $\ell = 2$, then $H$ is a semidirect product: $H = K \rtimes H/K$.
If $\ell > N$, then $H \ne K \rtimes H/K$. ($N=2n$, if $n$ odd, $N=n$
else.)
\end{theorem}
\begin{proof}
Consider Equation \eqref{eq:exseq}. By the splitting lemma, $H = K
\rtimes H/K$, if and only if there is a homomorphism $Q: H/K \to H$
such that $PQ = \id$ on $H/K$. If $\ell=2$, then $H/K \cong C_2 =
\{ \id, x \}$.  Let $Q(\id)=\id$ and $Q(x) = \varphi$, where $\varphi$
is the reflection in the vertical line through $\frac{1}{2}$. Certainly,
$\varphi \in G$ holds: $\varphi: a + bi \mapsto 1-a +bi$ is a
composition of  $z \mapsto \overline{z}, \; z \mapsto iz, \; z \mapsto
z+1$.  By Lemma \ref{lgleich2} (see below), the colouring is perfect,
thus $f \in H$, and $c(0) \ne c(1)$, thus $f$ interchanges the two
colours. This makes $Q$ a homomorphism.

In general, there is no such homomorphism $Q$: All elements
$\pi \in H/K$ are of finite order. If $Q(\pi)$ contains a translational
part, it is of infinite order in $H$. Thus there is $k = \mbox{ord}(\pi)$
such that $\id = Q(\id) = Q(\pi^k) \ne Q(\pi)^k$, hence $Q$ is not a
homomorphism.

The elements $z \in \M_n$ with $|z|=1$ are exactly the $N$ elements
of the form $\pm \xi^i_n$ (see Lemma \ref{lem:unit}). Thus they can
carry at most $N$
colours. They can be mapped to each other by rotations about 0, or
by reflection $z \mapsto \overline{z}$. $H$ acts transitively on the
colours. Thus in any colouring with more than $N$ colours there has to
be a map $g \in H$ which is neither a rotation about 0, nor a
reflection $z \mapsto \overline{z}$. Thus the colouring requires a map
$h$ with some translational part, which is of infinite
order. Consequently, $Q(h)$ is of infinite order.
\end{proof}

The results in this section yield $H$ in general --- that is, whether
a colouring of $\M_n$ is perfect or not --- depending on the
factorisation of the generator of the underlying ideal. In the next
section we obtain results yielding $K$, depending only on the number
$\ell$ of colours. As a byproduct, we also obtain partial results on $H$,
depending on $\ell$ only.

\section{The structure of $K$}\label{sec:4}
This section contains several lemmas which determine the structure of 
$K$ in all but finitely many cases. Recall that $\ell$ denotes the 
number of colours and $c(x)$ denotes the colour of $x$. We denote the group
of translations by elements in $(q)$ by $T_{(q)}$. Note that $T_{(q)}$
is always contained in $K$.

\begin{lemma} \label{0ungleichxi}
$\ell \ge 2$ if and only if $c(0) \ne c(\pm\xi^i)$ for all $i \le n$.
\end{lemma}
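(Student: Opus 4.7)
The plan is to rely on the fact that, throughout this section, $c$ is an ideal colouring of $\M_n$, so (after relabeling) $c^{-1}(1) = (q) = q\M_n$ and the remaining colour classes are the non-trivial cosets of $(q)$ in $\M_n$. With this identification, the condition ``$c(0) = c(z)$'' is the same as ``$z \in (q)$.'' The key algebraic input is Lemma \ref{lem:unit}, which tells us that the elements $\pm \xi_n^i$ are precisely the units of $\M_n$ lying on the unit circle; in particular, each $\pm \xi_n^i$ is a unit in $\M_n$.

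For the forward direction, I would argue by contrapositive. Suppose $c(0) = c(\pm \xi_n^i)$ for some $i$. Then $\pm\xi_n^i \in (q)$, and since $\pm\xi_n^i$ is a unit in $\M_n$, the ideal $(q)$ contains a unit and therefore $(q) = \M_n$. Consequently $[\M_n : (q)] = 1$, i.e.\ $\ell = 1$, contradicting $\ell \ge 2$.

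For the converse, I would take the value $i = n$ (which is covered by the hypothesis $i \le n$). Since $\xi_n^n = 1$, the assumption gives $c(0) \ne c(1)$, so the colouring takes at least two distinct values on $\M_n$, hence $\ell \ge 2$.

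The argument is essentially a direct combination of the definition of an ideal colouring with the classification of unit-modulus units in $\M_n$ given by Lemma \ref{lem:unit}; there is no genuine obstacle to be overcome. The only point that needs a moment of care is making explicit that we are working with an ideal colouring here (so that $c^{-1}(c(0))$ really is the ideal $(q)$), which is justified by Theorem \ref{thm:bal} for the chirally perfect colourings that are the subject of this section.
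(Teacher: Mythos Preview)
Your proposal is correct and follows essentially the same idea as the paper: both arguments boil down to the observation that $c(0)=c(\pm\xi^i)$ is equivalent to $\pm\xi^i\in(q)$, and that an ideal containing a unit is the whole ring. The paper phrases this as a single chain of biconditionals $\ell=1\Leftrightarrow(q)=(1)\Leftrightarrow(q)=(\pm\xi^i)\Leftrightarrow 0,\xi^i\in(q)\Leftrightarrow c(0)=c(\xi^i)$, while you split into two directions and use the pleasant shortcut $\xi_n^n=1$ for the converse; but there is no substantive difference. One small remark: you do not actually need Lemma~\ref{lem:unit} here, since the fact that each $\pm\xi_n^i$ is a unit is immediate (its inverse is $\pm\xi_n^{n-i}$); the full force of that lemma is only needed when one wants the converse statement about unit-modulus units.
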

\begin{proof}
$\ell = 1 \aeq (q)=(1) \aeq (q)=(\pm\xi^i)$ for some $i \aeq 0, \xi^i \in
  (q) \aeq c(0)=c(\xi^i)$.
\end{proof}

Let $\phi$ denote Euler's totient function.

\begin{lemma} \label{lgleich2}
Each 2-colouring of $\M_n$ induced by $(q)$ is perfect. Moreover,
$\ell = 2$, if and only if $c(\pm\xi^i) = c(\xi^j)\;
\mbox{for all}\; i,j \le n$, if and only if $K=T_{(q)} \rtimes D_N$.
\end{lemma}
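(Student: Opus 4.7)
The plan is to prove the three-way equivalence by going around the cycle $\ell = 2 \Rightarrow c(\pm\xi^i) = c(\xi^j)\ \forall i,j \Rightarrow K = T_{(q)} \rtimes D_N \Rightarrow \ell = 2$ (under the implicit convention $\ell \ge 2$, excluding the trivial one-colouring), and then to read off perfectness as a consequence of the equivalence.

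The first implication follows at once from Lemma \ref{0ungleichxi}: when $\ell = 2$, every $\pm\xi^i$ has colour different from $c(0)$, and must therefore share the unique remaining colour. For the second implication, the hypothesis translates to $\xi^i \pm \xi^j \in (q)$ for all $i, j$; taking $i = j = 0$ with opposite signs yields $2 \in (q)$, and taking $j = 0$ yields $\xi^k - 1 \in (q)$ for every $k$. The rotation generator of $C_N$ is multiplication by $\xi_N \in \M_n$ (equal to $\xi_n$ for $n$ even and to $-\xi_n^{(n+1)/2}$ for $n$ odd), so $\xi_N - 1 \in (q)$ in either case and each coset is preserved. For the reflection $z \mapsto \bar z$ one uses $\xi^{-k} - \xi^k = (\xi^{-k} - 1) - (\xi^k - 1) \in (q)$ and then $\Z$-linearity in the expansion $z = \sum a_k \xi^k$ to conclude $\bar z - z \in (q)$ for every $z \in \M_n$. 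Hence $D_N \subset K$, and combined with the always-valid identity $K \cap \M_n = T_{(q)}$ (a translation by $t$ preserves colours iff $c(t) = c(0)$ iff $t \in (q)$), one reads off $K = T_{(q)} \rtimes D_N$.

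The final implication is the main obstacle. From $D_N \subset K$ and applying the rotation $z \mapsto \xi_N z$ to $z = 1$, one gets $\xi_N - 1 \in (q)$; the difficulty is extracting $\ell = 2$ from this single containment, which I would handle by a case analysis over $n$ in the list \eqref{eq:cls1}. For $n = 2^a \in \{4,8,16,32\}$ the value $\Phi_n(1) = 2$ shows $(\xi_n - 1)$ is a maximal ideal of index $2$, so $(q) \supseteq (\xi_n - 1)$ together with $\ell \ge 2$ forces $(q) = (\xi_n - 1)$ and $\ell = 2$. For every other $n$ in the list, $\xi_N - 1$ turns out to be a unit of $\M_n$: when $n$ is even but not a power of $2$ this follows from $\Phi_n(1) = 1$, and when $n$ is odd from $\Phi_{2n}(1) = \Phi_n(-1) = 1$ (using $\Phi_{2n}(x) = \Phi_n(-x)$ for $n$ odd). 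A unit lying in $(q)$ forces $(q) = \M_n$ and thus $\ell = 1$, contradicting $\ell \ge 2$.

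With the equivalence in hand, perfectness of any 2-colouring falls out directly: $D_N \subset K \subset H$, and translations by arbitrary $t \in \M_n$ always permute the cosets of $(q)$ and therefore lie in $H$, so $H \supseteq \M_n \cdot D_N = G$, giving $H = G$ and hence perfectness by Definition \ref{def:perfect}.
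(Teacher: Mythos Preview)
Your proof is correct, and the cycle you set up matches the paper's logical structure; the genuine difference is in how you close it. The paper handles the return to $\ell=2$ in two short moves: first the trivial observation that $D_N\subset K$ forces all $\pm\xi^i$ (a single $C_N$-orbit of $1$) to share one colour, recovering condition~(ii); and then the purely additive argument that $2,\xi-1,\dots,\xi^{\phi(n)-1}-1\in(q)$ already span a $\Z$-sublattice of index~$2$ in $\M_n$, so $\ell\le 2$. You instead go directly from $K=T_{(q)}\rtimes D_N$ to $\ell=2$ by extracting only $\xi_N-1\in(q)$ and splitting on whether $n$ is a power of~$2$, using the values $\Phi_n(1)$ and $\Phi_{2n}(1)$. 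Both routes are valid. The paper's is shorter and uniform in $n$, and yields an explicit description of $(q)$; yours requires the case split but has the pleasant byproduct of showing that $2$-colourings exist only for $n\in\{4,8,16,32\}$ among the values in~\eqref{eq:cls1}. Incidentally, your treatment of the reflection --- writing $\bar z - z$ as a $\Z$-linear combination of $\xi^{-k}-\xi^k\in(q)$ --- is cleaner than the paper's parity-of-coefficient-sum sketch, which glosses over how conjugation interacts with the chosen $\Z$-basis.
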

\begin{proof}
Since $\ell=2$, we consider two cosets of $(q)$, namely, $(q)$ and
$(q)+1$. Note that $2 \in (q)$ and $\pm\xi^i\in (q)+1$ for any $i \in
\Z$. Consequently $\pm\xi^i \pm\xi^j \in (q)$, while $\pm\xi^i \pm\xi^j
\pm\xi^k \in (q)+1$ for any $i,j,k\in\Z$, and in general:
If $z=\sum_{i=0}^{\phi(n)-1} \alpha_i\xi^i \in \M_n$, with $\alpha_i
\in \Z$, then $z \in (q)$ if and only if
$\sum_{i=0}^{\phi(n)-1} \alpha_i \equiv 0 \mod 2$, otherwise $z \in (q)+1$. Now,
$\overline{z} = \sum_{i=1}^{\phi(n)-1} \alpha_i\xi^{n-i}$, and by conjugation of
$z$, the sum $\sum_{i=0}^{\phi(n)-1} \alpha_i$ does not change.
This implies that ${z} \in (q)$ if and only if $\overline{z} \in
(q)$. Similarly, $z \in (q) + 1$ if and only if $\overline{z} \in (q)
+ 1$. Thus the reflection
$z \mapsto \overline{z}$ maps $(q)$ to itself and $(q) + 1$ to
$(q)+1$. Hence the reflection is in $H$, and so the colouring is
perfect. Furthermore it fixes the coloured pattern, and so is also in
$K$.

From Lemma \ref{0ungleichxi} follows that $c(\pm\xi^i) \ne c(0) \ne
c(\xi^j)$ for all $i,j$. Since $\ell = 2$, that is, there are two colours
only, it follows $c(\pm\xi^i) = c(\xi^j)$.

Vice versa, if $c(\pm\xi^i)
= c(\xi^j)$ for all $i,j$, then $\pm\xi^i-\xi^j, 2 \in (q)$. This
means, analogous to the reasoning above,
$(q) = \big\{\sum_{i=0}^{\phi(n)-1} \alpha_i \xi^i \, | \,
\sum_{i=0}^{\phi(n)-1}\alpha_i\equiv 0\mod2, \; \alpha_i \in \Z \big\}$,
and thus $(q)$ has only one other coset, say $(q)+1$. This settles the first
equivalence.
Since $\pm \xi^i (q) =(q)$, $\pm \xi^i ((q)+1) = (q) \pm \xi^i = (q)+1$, it
follows that both the cosets are invariant under $N$-fold rotations, and so
$K=T_{(q)} \rtimes D_N$, since the reflection is also in $K$ as noted
above. Vice versa, if $K=T_{(q)} \rtimes D_N$, then
$c(\pm\xi^i) = c(\xi^j)$.
\end{proof}

\begin{lemma} \label{elllarge}
For all $\ell$-colourings of $\M_n$ holds: If $\ell > 2^{\phi(n)}$ then
$K=T_{(q)}$.
\end{lemma}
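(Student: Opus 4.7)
The plan is to establish $K \subseteq T_{(q)}$, the reverse inclusion being immediate. Any $k \in K \subseteq G = \M_n \rtimes D_N$ decomposes as $k(z) = r(z) + t$ with $t \in \M_n$ and $r \in D_N$; since $r$ fixes $0$, it is either a rotation $r(z) = \epsilon z$ or a reflection $r(z) = \epsilon \bar z$ for some root of unity $\epsilon \in \M_n$. Colour-preservation is equivalent to $k(z) - z \in (q)$ for all $z \in \M_n$; evaluating at $z = 0$ forces $t \in (q)$. It thus remains to show that under the hypothesis $\ell > 2^{\phi(n)}$, the linear part $r$ must be $\id$.

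I extract necessary congruences by plugging in small test values. In the rotation case, $z = 1$ yields $\epsilon - 1 \in (q)$. In the reflection case, $z = 1$ again yields $\epsilon - 1 \in (q)$; if additionally $\epsilon = 1$, then $z = \xi$ yields $\bar\xi - \xi \in (q)$. So if $r \ne \id$, then either $\omega - 1 \in (q)$ for some root of unity $\omega \ne 1$, or $\bar\xi - \xi \in (q)$.

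The main ingredient is a norm estimate. For any root of unity $\omega \in \M_n$, each Galois conjugate $\sigma(\omega)$ is itself a root of unity, so the triangle inequality gives $|\sigma(\omega) - 1| \leq 2$; taking the product over the $\phi(n)$ Galois conjugates yields $|N_n(\omega - 1)| \leq 2^{\phi(n)}$. The same bound holds for $\bar\xi - \xi$, whose conjugates $\overline{\sigma(\xi)} - \sigma(\xi)$ each have modulus at most $2$.

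Now $(\omega - 1) \subseteq (q)$ implies $q \mid \omega - 1$, hence $\ell = |N_n(q)|$ divides $|N_n(\omega - 1)|$, and likewise for $\bar\xi - \xi$. In either non-trivial scenario this forces $\ell \leq 2^{\phi(n)}$, contradicting the hypothesis. Therefore $r = \id$, and $k$ is simply translation by $t \in (q)$, i.e.\ $k \in T_{(q)}$. The only minor subtlety is the reflection case with $\epsilon = 1$, where $z = 1$ gives no useful information and one must test a second basis element such as $\xi$; afterwards the triangle-inequality bound handles everything uniformly.
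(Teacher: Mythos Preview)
Your proof is correct and follows essentially the same approach as the paper: both reduce to showing that if the linear part $r\in D_N$ of some $k\in K$ is nontrivial, then a nonzero difference of two roots of unity lies in $(q)$, and the triangle-inequality bound $|N_n(\cdot)|\le 2^{\phi(n)}$ forces $\ell\le 2^{\phi(n)}$. The paper phrases this as ``$c(\xi^i)\ne c(\pm\xi^j)$ for all $i\ne j$'', whereas you test at $z=1$ (and $z=\xi$ for the reflection with $\epsilon=1$); your case split is a little more explicit, but the underlying argument is the same.
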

\begin{proof}
Recall that $K$ is a subgroup of $T_{(q)} \rtimes D_N$.
If some $\id \ne g \in D_N$ is an element of $K$, then $g$ maps some
$\pm \xi^i$ to some $\xi^j \ne \pm\xi^i$, with $c(\xi^i) = c(\pm\xi^j)$.
Thus it suffices to show $c(\xi^i) \ne c(\pm \xi^j)$ for all $i \ne
j$.

Assume $c(\xi^i) = c(\pm \xi^j)$. Then $\xi^i \pm \xi^j \in (q)$. Hence
\[ N_n(\xi^i \pm \xi^j) = \bigg| \prod_{k=1}^{\phi(n)} \sigma_k(\xi^i \pm
\xi^j) \bigg| = \prod_{k=1}^{\phi(n)} | \xi^{i_k} \pm \xi^{j_k} | \le
2^{\phi(n)}, \; \mbox{where} \; \sigma_k \in Gal(\Q(\xi_n),\Q). \]
It follows $\ell = [\M_n : (q)] \le  N_n(\xi^i \pm \xi^j) \le
2^{\phi(n)}$, which contradicts $\ell > 2^{\phi(n)}$.
\end{proof}

\begin{lemma} \label{lem:q=2}
If $(q)=(2)$, then $H=G$. Furthermore, for all $n \ne 4$ in
\eqref{eq:cls1}: $K=T_{(q)} \rtimes C_2$; and for $n=4$: $K=T_{(q)}
\rtimes D_2$. 
\end{lemma}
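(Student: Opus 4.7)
The equality $H = G$ is immediate from Theorem \ref{thm:bal}(3): the rational integer $q = 2$ is trivially balanced, with factorisation $\epsilon \cdot x \cdot p$ given by $\epsilon = 1$, $x = 2$, and empty ramified-prime product (equivalently, $\overline{2} = 2 \in (2)$, so Lemma \ref{lem:bal} applies). Hence the colouring is perfect and $H = \M_n \rtimes D_N$.

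For the structure of $K$, I would first reduce the problem to a pair of algebraic congruences. Since $H = \M_n \rtimes D_N$, any element is (translation by $t$) composed with some $g \in D_N$, and the requirement that it fix every coset of $(2)$ forces $t \in (2)$ and leaves a condition only on $g$. Defining $K_0 := \{g \in D_N : g(z) \equiv z \pmod{2}\text{ for all }z \in \M_n\}$, one obtains $K = T_{(2)} \rtimes K_0$. The rotation $z \mapsto \xi_N^k z$ lies in $K_0$ iff $\xi_N^k \equiv 1 \pmod 2$, while the reflection $z \mapsto \xi_N^k \overline{z}$, applied to $z = \xi_n^j$, yields the pair of conditions $\xi_N^k \equiv 1 \pmod 2$ and $\xi_n^2 \equiv 1 \pmod 2$. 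Note the second condition is independent of $k$, so either $K_0$ contains no reflections, or else it contains a reflection paired with each rotation of $K_0$.

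Since $-1 \equiv 1 \pmod 2$, the order of $\xi_N$ in $(\M_n/(2))^*$ divides $N/2$. The key claim is that this order equals $N/2$ for every $n$ in \eqref{eq:cls1}, and that $\xi_n^2 \equiv 1 \pmod 2$ exactly when $n = 4$. I would prove both by splitting on the ramification of $2$ in $\M_n$. For $n$ odd, $2$ is unramified and $\M_n/(2)$ is a product of finite fields $\F_{2^f}$ in which $\xi_n$ is a primitive $n$-th root of unity; so $\xi_n$ has order $n$, and via $\xi_N = \xi_{2n} \equiv \pm \xi_n^{(n+1)/2} \pmod 2$ together with $\gcd(n,(n+1)/2) = 1$, the element $\xi_N$ also has order $n = N/2$, while $\xi_n^2$ has order $n \ge 3$. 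For $n = 2^a m$ with $a \ge 2$ and $m$ odd, $\M_n/(2)$ decomposes as a product of local rings $\F_{2^f}[t]/(t^{2^{a-1}})$ (with $t = \xi_{2^a} - 1$), and the image of $\xi_n$ is a product of a primitive $m$-th root $\zeta$ in $\F_{2^f}^*$ and the element $1 + t$. The Frobenius identity $(1+t)^{2^b} = 1 + t^{2^b}$ in characteristic $2$ shows that $1+t$ has order exactly $2^{a-1}$, hence $\xi_n$ has order $m \cdot 2^{a-1} = N/2$. An analogous computation for $\xi_n^2 = (1+t^2)\zeta^2$ gives order $1$ only when both $a = 2$ and $m = 1$, i.e.\ only when $n = 4$.

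Putting these together, the rotational part of $K_0$ always equals $\{\mathrm{id}, z \mapsto -z\} \cong C_2$, and $K_0$ contains the two extra reflections $z \mapsto \overline{z}$ and $z \mapsto -\overline{z}$ precisely when $n = 4$; this promotes $K_0$ from $C_2$ to $D_2$ in that case, yielding the stated structure of $K$. The main obstacle is the ramified case $n = 2^a m$: identifying $\M_n/(2)$ with a product of non-reduced local rings and extracting the precise orders of $1+t$ and $1+t^2$ via the Frobenius identity. The unramified case and the reflection bookkeeping are routine by comparison.
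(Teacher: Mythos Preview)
Your proof is correct but takes a genuinely different route from the paper's. After the common first step ($q=2$ balanced $\Rightarrow H=G$), the paper determines $K$ by an \emph{archimedean norm bound} inherited from Lemma~\ref{elllarge}: if $c(\xi^i)=c(\pm\xi^j)$ then $\xi^i\mp\xi^j\in(2)$, hence $N_n(\xi^i\mp\xi^j)\ge 2^{\phi(n)}$; but $N_n(\xi^i\mp\xi^j)=\prod_k|\sigma_k(\xi^i)\mp\sigma_k(\xi^j)|\le 2^{\phi(n)}$, with equality forcing every factor to equal~$2$, i.e.\ $\xi^i=\mp\xi^j$. This immediately gives that the only nontrivial rotation in $K$ is $z\mapsto-z$, and checking $N_n(\xi-\xi^{n-1})<2^{\phi(n)}$ for $n\ne4$ rules out the reflection. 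Your argument instead computes the residue ring $\M_n/(2)$ explicitly---as a product of fields $\F_{2^f}$ when $n$ is odd, and of Artinian local rings $\F_{2^f}[t]/(t^{2^{a-1}})$ when $n=2^am$---and reads off the multiplicative orders of $\xi_N$ and $\xi_n^2$ directly, using the Frobenius identity $(1+t)^{2^b}=1+t^{2^b}$ for the ramified part.

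What each buys: the paper's argument is shorter and entirely elementary---it needs only the triangle inequality $|\xi^i\pm\xi^j|\le2$ and the preceding lemma, with no ramification theory. Your approach requires more input (the shape of $\M_n/(2)$) but is more structural: it makes transparent \emph{why} $n=4$ is special (it is the unique case with $2$-part $2^a=4$ and odd part $m=1$, so that $1+t^2=0$), and your reduction to the subgroup $K_0\le D_N$ together with the observation that a reflection in $K_0$ forces $\xi_N^k\equiv1$ (hence $k\in\{0,N/2\}$) handles all reflections uniformly, a point the paper leaves implicit when it checks only $z\mapsto\overline{z}$. One cosmetic remark: your identification ``image of $\xi_n=(1+t)\zeta$'' tacitly replaces $\xi_n$ by the primitive $n$-th root $\xi_{2^a}\xi_m$; this is harmless for the order computation since any primitive $n$-th root is $(1+t)^s\zeta^r$ with $s$ odd and $\gcd(r,m)=1$, but it is worth saying.
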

\begin{proof}
$q=2$ is balanced. Thus, $H=G$.

Now for $K$, compare the last proof: Note that $N_n(2)=2^{\phi(n)}$, 
and $c(1)=c(-1)$. But for all other complex roots of unity $\pm \xi_n^k$
($0<k<n$) holds $c(1)\ne c(\xi_n^k)$, since otherwise
we would have a factor of modulus strictly less than 2 in the
equation above, and the $\le$ becomes $<$.  Consequently,
$c(\xi^i)=c(-\xi^i)$, and so the only rotation in $K$ is the
rotation by $\pi$ about 0.

The reflection $z \mapsto \overline{z}$ maps $\xi$ to $\xi^{n-1}$. For
all $n \ne 4$ in \eqref{eq:cls1}, $N_n(\xi-\xi^{n-1}) < 2^{\phi(n)}$
and so $c(\xi)\ne c(\xi^{n-1})$. Thus the reflection is not contained
in $K$, and so $K=T_{(q)} \rtimes C_2$. Only for the case $n=4$ we get
$N_n(\xi-\xi^{n-1}) = 2^{\phi(n)}$, and so the reflection is in $K$
and thus $K=T_{(q)} \rtimes D_2$. 
\end{proof}

Why is the case $n=4$, $\ell = 2^{\phi(4)} = 4$ different? By inspection
of this case (see Figure \ref{fig:bspcol}) we find that $K = T_{(2)}
\rtimes D_2$. This is because $c(1) = c(-1)$ and $c(i) = c(-i)$; and only
in this case does the reflection $z \mapsto \overline{z}$ also belong in $K$.

\begin{lemma} \label{lem:normq=2n}
 If $\ell = 2^{\phi(n)}$ but $(q)\neq(2)$, then $K=T_{(q)}$
\end{lemma}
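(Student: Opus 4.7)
My strategy is to argue by contradiction, mirroring the structure of the proof of Lemma~\ref{lem:q=2} but exploiting that, when $\ell = 2^{\phi(n)}$, the norm bound used in Lemma~\ref{elllarge} becomes \emph{tight} rather than strict. Suppose some $g \in K$ is not a pure translation; write $g = (t, r)$ with $t \in \M_n$ and non-trivial $r \in D_N$. Applying $g$ to $0$ and using $c(g(0)) = c(0)$ forces $t \in (q)$, hence $(t,\mathrm{id}) \in T_{(q)} \subseteq K$, so the rotation/reflection $r$ itself already lies in $K$. The task thus reduces to ruling out any non-identity $r \in D_N$ that preserves every colour.

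Since $r \neq \mathrm{id}$ is $\R$-linear and $\{1,\xi\}$ spans $\C$ over $\R$, at least one of $1$ or $\xi$ must be moved by $r$; pick $\xi^i \in \{1,\xi\}$ with $r(\xi^i) \neq \xi^i$. By Lemma~\ref{lem:unit}, $r(\xi^i)$ is again a unit of $\M_n$ of complex modulus $1$, hence of the form $\pm\xi^j$. Then $r(\xi^i) - \xi^i = \pm\xi^j - \xi^i$ is a non-zero element of $(q)$, so combining the divisibility bound $\ell \le N_n(r(\xi^i)-\xi^i)$ with the factor-wise bound from Lemma~\ref{elllarge} sandwiches the norm:
\[ 2^{\phi(n)} \;=\; \ell \;\le\; N_n(r(\xi^i) - \xi^i) \;=\; \prod_{\sigma \in \mathrm{Gal}(\Q(\xi_n)/\Q)} |\pm\xi^{j_\sigma} - \xi^{i_\sigma}| \;\le\; 2^{\phi(n)}. \]

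The key step, which I expect to be the only non-routine point, is the equality analysis of this chain: each Galois factor must equal exactly $2$, forcing the algebraic integer $\pm\xi^{j-i}$ to have every Galois conjugate equal to $-1$, and hence $\pm\xi^{j-i} = -1$. A short case check in either sign then shows $r(\xi^i) = -\xi^i$, so $-2\xi^i \in (q)$, and multiplying by the unit $\xi^{-i}$ yields $2 \in (q)$. But $[\M_n:(2)] = N_n(2) = 2^{\phi(n)} = \ell = [\M_n:(q)]$, so the inclusion $(2)\subseteq(q)$ must be equality, contradicting the hypothesis $(q)\neq(2)$. The whole argument is essentially a sharpening of Lemmas~\ref{elllarge} and~\ref{lem:q=2}: the only genuinely new observation is that tightness of the norm inequality already pins down $(q)$ as $(2)$.
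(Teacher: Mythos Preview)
Your proof is correct and follows essentially the same approach as the paper's. Both arguments use the tightness of the norm bound $N_n(\pm\xi^j-\xi^i)\le 2^{\phi(n)}$ to force $2\in(q)$, and then conclude $(q)=(2)$ from the equality of indices, giving the desired contradiction. The only difference is cosmetic: the paper first cites the proof of Lemma~\ref{lem:q=2} to reduce to the single candidate $r=-\id$ and then derives $c(1)=c(-1)\Rightarrow 2\in(q)$, whereas you treat an arbitrary $r\in D_N\cap K$ directly and extract $r(\xi^i)=-\xi^i$ from the equality case, arriving at the same $2\in(q)$ without the intermediate reduction.
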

\begin{proof}
From the proof of the previous lemma, there can be no more symmetries
than the rotation by $\pi$ that can fix the colours. If this rotation
by $\pi$ indeed fixes the colours, then in
particular $c(1)=c(-1)$. Thus $2 \in (q)$, and so
$(2) \subseteq (q)$. But since $(q)$ and $(2)$ have equal algebraic
norms, then it follows that $(q)=(2)$, which is a contradiction.
Therefore, $K=T_{(q)}$.
\end{proof}

The case $\ell = 2^{\phi(n)}$ but $(q)\neq(2)$ first occurs when
$n=7$, see Table \ref{tab}.

\begin{lemma} \label{lem:lnprim}
If $2 < \ell = n$, where $n$ is prime in $\Z$, then $H=G$ and $K=T_{(q)}
\rtimes D_n$.
\end{lemma}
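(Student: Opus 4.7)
The plan is to identify the ideal $(q)$ explicitly, then read off $H$ from Theorem \ref{thm:bal} and $K$ from a direct computation modulo $(q)$.

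Since $\ell = N_n(q) = n$ is prime and $\M_n$ is a principal ideal domain, $(q)$ is a prime ideal of $\M_n$ of norm $n$. For $n$ an odd prime in \eqref{eq:cls1}, classical ramification theory for cyclotomic fields (cf.\ \cite{wash}) gives $n\M_n = (1-\xi_n)^{n-1}$ up to a unit, so $(1-\xi_n)$ is the unique prime ideal of $\M_n$ above $n$, and its norm is $n$. Hence $(q) = (1-\xi_n)$, and we may take $q = 1 - \xi_n$. The identity $\overline{1-\xi_n} = 1 - \xi_n^{-1} = -\xi_n^{-1}(1-\xi_n)$ shows $\overline{q} \in (q)$, so $q$ is balanced and Theorem \ref{thm:bal}(3) yields $H = G$.

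For $K$, I would use the ring isomorphism $\psi\colon \M_n/(1-\xi_n) \to \Z/n\Z$ induced by $\xi_n \mapsto 1$; under $\psi$, the colour of $z = \sum_i a_i \xi_n^i$ is represented by $\sum_i a_i \bmod n$. Every element of the point group $D_{2n}$ acts on $\M_n$ either as $z \mapsto \eta z$ or as $z \mapsto \eta \overline{z}$ with $\eta \in \{\pm\xi_n^j : 0 \le j < n\}$. Since $\xi_n^{\pm j} \equiv 1 \pmod{(1-\xi_n)}$, the induced action on $\Z/n\Z$ is multiplication by $+1$ when $\eta = \xi_n^j$ and by $-1$ when $\eta = -\xi_n^j$. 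Because $n > 2$, multiplication by $-1$ is nontrivial on $\Z/n\Z$, so precisely the $2n$ point-group elements $z \mapsto \xi_n^j z$ and $z \mapsto \xi_n^j \overline{z}$ ($j = 0, \ldots, n-1$) preserve every colour. These form a subgroup of $D_{2n}$ isomorphic to $D_n$; combined with $T_{(q)} \subseteq K$ this gives $K = T_{(q)} \rtimes D_n$.

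The main technical point is justifying $(q) = (1-\xi_n)$: this rests on total ramification of the rational prime $n$ in the $n$-th cyclotomic field, which identifies $(1-\xi_n)$ as the only prime ideal of norm $n$ in $\M_n$. After this reduction, both claims are direct verifications modulo $(1-\xi_n)$.
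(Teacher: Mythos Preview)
Your proof is correct and follows essentially the same route as the paper's: identify $(q)=(1-\xi_n)$ via total ramification of $n$ in $\M_n$, deduce $H=G$ from $q$ being balanced, and determine $K$ by analysing the point-group action on the quotient $\M_n/(q)\cong\Z/n\Z$. Your presentation is a bit more explicit---you verify $\overline{q}\in(q)$ directly and use the reduction map $\xi_n\mapsto 1$ systematically to handle all $4n$ point-group elements at once---but the underlying argument is the same.
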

\begin{proof}
Note that in the case when $n$ is an odd prime, the symmetry group of
$(q)$ contains $D_N = D_{2n}$.

Let $2 < \ell = n$ and $n$ prime in $\Z$. Then the unique
factorisation of $\ell = n$ in $\M_n$ is $\ell = \prod_{i=1}^{n-1}
(1-\xi^i)$ \cite{wash}. Thus $\ell$ ramifies, and the possible
generators of the ideal $(q)$ are exactly the $1-\xi^i$. Therefore, by
Theorem \ref{thm:prod}, each corresponding colouring is perfect. In fact,
there is only one such colouring, since for all $1 
\le j \le n$ holds: $1-\xi^j \in (1-\xi)$. (This follows from
$\xi^k(1-\xi) \in (q)$, thus $\sum_{k=0}^{j-1} \xi^k(1-\xi) = 1-\xi^j
\in (q)$.) Moreover, it follows  that $c(1)=c(\xi^j)$ for all $j$.

Since $\ell$ is prime in $\Z$, we have $\M_n / (q)\cong C_\ell$, and
so the $\ell$ distinct cosets can be expressed as $(q), (q) + 1, (q) +
2, \ldots, (q) + \ell - 1$. Each coset is invariant under
multiplication by $\xi^j$, but not under multiplication by $-\xi$.
Thus, $K=T_{(q)} \rtimes D_n$.
\end{proof}

\begin{lemma} \label{krefl}
If $H=G$ and $\ell$ is prime in $\Z$, then $K$ contains
a reflection. Thus, $T_{(q)} \rtimes C_2$ is a subgroup of $K$.
\end{lemma}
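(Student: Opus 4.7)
The plan is to exploit the fact that $\M_n/(q)$ is cyclic of order $\ell$ (because $\ell$ is prime). Since $H=G$, Theorem~\ref{thm:bal} says $q$ is balanced, so by Lemma~\ref{lem:bal} complex conjugation fixes $(q)$ setwise. Hence the reflection $\sigma:z\mapsto\overline{z}$ descends to a well-defined additive automorphism $\pi_\sigma$ of the quotient $\M_n/(q)\cong\Z/\ell\Z$.

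Next, I would use that $\mathrm{Aut}(\Z/\ell\Z)\cong(\Z/\ell\Z)^*$ and that $\pi_\sigma^2=\id$, so $\pi_\sigma$ must act as multiplication by $+1$ or by $-1$. In the first case, $\sigma$ already lies in $K$ and we are done. In the second case, I would replace $\sigma$ by the reflection $\sigma':z\mapsto-\overline{z}$. Because $N$ is always even (whether $n$ is even or odd), the rotation $z\mapsto\xi_N^{N/2}z=-z$ lies in $D_N$, so $\sigma'$ is genuinely a reflection in $G=H$. Its induced map on $\M_n/(q)$ sends $[z]$ to $-[\overline{z}]=-(-[z])=[z]$, and therefore $\sigma'\in K$.

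Finally, I would combine this reflection with $T_{(q)}$, which is always contained in $K$. Because $q$ is balanced, the point group $D_N$ fixes $(q)$ setwise, so $T_{(q)}$ is normal in $G$ and hence in $K$. Its intersection with the order-two subgroup generated by the chosen reflection is trivial (a nontrivial translation cannot be a reflection), so together they generate a semidirect product $T_{(q)}\rtimes C_2\le K$. The only delicate point is the case $\pi_\sigma=-1$; the key observation is that $-1$ is already available as a rotation ($\xi_N^{N/2}$), so composing $\sigma$ with a half-turn cancels the unwanted sign and produces an honest colour-preserving reflection. Everything else is routine once the identification $\M_n/(q)\cong\Z/\ell\Z$ is in place.
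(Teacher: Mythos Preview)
Your proof is correct and rests on the same identification the paper uses, namely $\M_n/(q)\cong\Z/\ell\Z$ when $\ell$ is prime. However, you take an unnecessary detour with the case distinction $\pi_\sigma=\pm 1$. Since $\overline{1}=1$, the induced map $\pi_\sigma$ fixes the generator $[1]$ of $\Z/\ell\Z$, and therefore $\pi_\sigma=\id$ automatically; the case $\pi_\sigma=-1$ never occurs, and the auxiliary reflection $\sigma':z\mapsto -\overline{z}$ is never needed. The paper's proof is exactly this direct observation: the cosets can be represented by the integers $0,1,\dots,\ell-1$, which are fixed by conjugation, so $z\mapsto\overline{z}$ already lies in $K$. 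Your argument is valid, just longer than necessary.
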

\begin{proof}
Because $\ell$ is prime, the $\ell$ distinct
cosets are $(q), (q) + 1, \ldots, (q) + \ell -
1$. Clearly, these cosets are invariant under conjugation, hence the
reflection $z \mapsto \overline{z}$ is contained in $K$. Consequently,
$K$ contains $T_{(q)} \rtimes C_2$ as a subgroup.
\end{proof}

The previous lemma together with Lemma \ref{elllarge} yields
the following result immediately.

\begin{lemma} If $\ell$ is prime in $\Z$ and $K = T_{(q)}$, then
  $H=G'$. In particular, if $\ell>2^{\phi(n)}$ is prime in $\Z$, then
  $H=G'$. \hfill  $\square$
\end{lemma}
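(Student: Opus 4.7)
The plan is to deduce this lemma by stringing together the contrapositives of Lemma \ref{krefl} and Lemma \ref{elllarge}; no new calculation is required. First I would fix the first assertion and set up the dichotomy: by Theorem \ref{thm:bal} every ideal colouring is at least chirally perfect, so $H$ is either the full symmetry group $G$ or its index-2 orientation-preserving subgroup $G'$. Hence to conclude $H=G'$ it suffices to rule out $H=G$.

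Next I would argue that under the hypothesis $K=T_{(q)}$, the group $K$ cannot contain any reflection, since $T_{(q)}$ consists only of translations by elements of $(q)$. But Lemma \ref{krefl} asserts that whenever $\ell$ is prime in $\Z$ and $H=G$, the group $K$ must contain the reflection $z\mapsto \overline{z}$. Contrapositively, since $\ell$ is prime and $K$ is reflection-free, we cannot have $H=G$. Combined with the dichotomy from Theorem \ref{thm:bal}, this forces $H=G'$, which is the first claim.

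For the ``in particular'' clause, I would simply observe that $\ell>2^{\phi(n)}$ together with Lemma \ref{elllarge} immediately yields $K=T_{(q)}$, so the hypothesis of the first claim is met and $H=G'$ follows.

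The only even mild subtlety is confirming that the two inputs apply in the stated generality: Theorem \ref{thm:bal} is invoked to guarantee $H\supseteq G'$ (so ``not $G$'' means ``exactly $G'$''), and Lemma \ref{krefl} is invoked in its contrapositive form. Since both are already proved in the preceding text and no further arithmetic in $\M_n$ is needed, there is essentially no obstacle; this explains why the lemma is stated with a $\square$ in place of a written proof.
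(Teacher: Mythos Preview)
Your proposal is correct and matches the paper's intended argument exactly: the paper states just before the lemma that ``the previous lemma together with Lemma \ref{elllarge} yields the following result immediately,'' i.e., contrapose Lemma \ref{krefl} (using the $H\in\{G,G'\}$ dichotomy from Theorem \ref{thm:bal}) for the first claim, and feed in Lemma \ref{elllarge} for the ``in particular'' clause. Your explicit mention of Theorem \ref{thm:bal} to justify the dichotomy is a welcome clarification but not a departure from the paper's approach.
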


\begin{lemma}
If $\ell > n$ and $\ell$ is prime in $\Z$, then $H=G'$. 
\end{lemma}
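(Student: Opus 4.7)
The plan is to argue that when $\ell > n$ is a rational prime, the generator $q$ of the underlying ideal cannot be balanced, so by Theorem~\ref{thm:bal}(3) the colouring fails to be perfect; since every ideal colouring is chirally perfect by Theorem~\ref{thm:bal}(2), this forces $H = G'$.

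First I would invoke Theorem~\ref{thm:bal}(1) to write the given colouring as an ideal colouring induced by some $(q)$ with $N_n(q) = [\M_n : (q)] = \ell$, and Theorem~\ref{thm:bal}(2) to conclude $H \supseteq G'$. It then suffices to show $H \ne G$, equivalently (by Theorem~\ref{thm:bal}(3)), that $q$ is not balanced.

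Next I would analyse the factorisation \eqref{eq:qfac} of $q$. Multiplicativity of the algebraic norm, together with primality of $\ell$ in $\Z$, rules out any nontrivial factorisation $q = ab$ with both $a,b$ non-units, since each such factor would contribute a norm of at least $2$ and $\ell$ admits no such splitting. Hence $q = \varepsilon\omega$ for a unit $\varepsilon$ and a single prime $\omega \in \M_n$ lying above $\ell$. The hypothesis $\ell > n$ forces $\ell \nmid n$, so $\ell$ is unramified in $\M_n$, meaning $\omega \in \Pp \cup \Cc$. If $\omega$ were inert, then $N_n(\omega) = \ell^{\phi(n)}$, but $\phi(n) \ge 2$ for every $n$ in the list \eqref{eq:cls1}, contradicting $N_n(\omega) = \ell$. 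Therefore $\omega$ must be complex-splitting, and by the convention underlying \eqref{eq:qfac} this means $(\omega) \ne (\bar\omega)$, i.e.\ $\bar\omega \notin (\omega)$. Consequently, in the factorisation \eqref{eq:qfac} of $q$, the prime $\omega$ appears with $\beta = 1$ and $\gamma = 0$, so $q$ is not balanced; equivalently, $\bar q \notin (q)$ by Lemma~\ref{lem:bal}. Theorem~\ref{thm:bal}(3) then yields $H \ne G$, and together with $H \supseteq G'$ and $[G:G'] = 2$ we conclude $H = G'$.

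The only slightly delicate step I anticipate is the assertion that a complex-splitting $\omega$ has $(\omega) \ne (\bar\omega)$; I would handle this by appealing to the very decomposition in \eqref{eq:qfac}, where $\omega_{p_j}$ and $\overline{\omega_{p_j}}$ are listed as non-associate primes by construction. The remainder of the proof is a short norm computation that reduces the general $q$ to a single complex-splitting prime above $\ell$ and rules out the inert alternative.
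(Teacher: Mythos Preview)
Your proof is correct and takes a genuinely different route from the paper's.

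The paper argues by contradiction via the colour-preserving group: assuming $H=G$, Lemma~\ref{krefl} forces the reflection $z\mapsto\bar z$ to lie in $K$, whence $c(\xi^i)=c(\xi^{n-i})$ and so $\ell\mid N_n(1-\xi^{n-2i})$. But each $N_n(1-\xi^{j})$ is either $2^{\phi(n)}$ or a divisor of $n^{\phi(n)}$ (via the identity $\prod_{j=1}^{n-1}(1-\xi^j)=n$), and a prime $\ell>n$ divides neither. Your approach bypasses $K$ altogether and works directly with Theorem~\ref{thm:bal}(3): since $N_n(q)=\ell$ is prime, $q=\varepsilon\omega$ for a single prime $\omega$ of norm $\ell$; since $\ell\nmid n$ this prime is unramified, and then you conclude $(\omega)\ne(\bar\omega)$, so $q$ is not balanced. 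This is more structural and reuses the paper's main theorem rather than an auxiliary lemma about $K$, at the price of importing a standard ramification fact for cyclotomic fields.

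One point deserves tightening. The trichotomy $\Pp/\Cc/\Rr$ in \eqref{eq:qfac} is not literally exhaustive for rational primes when $\phi(n)>2$: an unramified $\ell$ can split partially with residue degree $1<f<\phi(n)$, and in such cases one may well have $(\omega)=(\bar\omega)$ (namely when complex conjugation lies in the decomposition group). So ``not inert and not ramified'' does not by itself give $(\omega)\ne(\bar\omega)$. What actually carries your argument is the stronger information $N_n(\omega)=\ell$, which forces residue degree $f=1$ and hence a \emph{trivial} decomposition group; complex conjugation, being nontrivial for $n\ge 3$, therefore moves $(\omega)$. Equivalently: if $(\omega)=(\bar\omega)$ with $\omega$ unramified over the real subfield $\Z[\xi+\bar\xi]$, then $N_n(\omega)$ would be a perfect square, contradicting primality of $\ell$. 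With this refinement your proof is complete.
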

\begin{proof}
If $H=G$, then by Lemma \ref{krefl} the
cosets must be fixed by taking conjugates. This would mean that
$c(\xi^i)=c(\xi^{n-i})$ and so $\xi^i(1-\xi^{n-2i})\in(q)$. Thus $\ell =
N_n(q)\mid N_n(1-\xi^{n-2i})=: \alpha$. Now, $\alpha$ must be either
$2^{\phi(n)}$ (when $\xi^{n-2i}=-1$) or a factor of $n^{\phi(n)}$.
For the latter case, recall that $\prod_{j=1}^{n-1}(1-\xi^j)=n$. 
Taking the algebraic norm of both sides, and noting that this norm is
completely multiplicative, gives us 
\begin{equation}\label{eqn:pp}
\prod_{j=1}^{n-1}N_n(1-\xi^j)=n^{\phi(n)}.
\end{equation} 
This suggests that each factor $N_n(1-\xi^j)$ on the left hand side of
Equation \eqref{eqn:pp} divides $n^{\phi(n)}$. But $\ell$ is not a
prime factor of $n$, and so $\ell$ cannot divide $\alpha$. Hence $H=G'$.
\end{proof}

\begin{lemma} \label{lem:lteiltnicht}
If $\ell \nmid 2^{\phi(n)}$ and $\ell \nmid n^{\phi(n)}$, then $K = T_{(q)}$.
\end{lemma}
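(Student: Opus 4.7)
My plan is to show the contrapositive for the non-translational part of $K$: if $K \ne T_{(q)}$, then one of $\ell \mid 2^{\phi(n)}$ or $\ell \mid n^{\phi(n)}$ must hold, violating the hypothesis. Because $K$ sits inside $T_{(q)} \rtimes D_N$ with $T_{(q)} \subseteq K$ always, and any $h \in K$ decomposes as a translation (whose translational part must lie in $(q)$, since the colour of $0$ is preserved) composed with an element of $D_N$, it suffices to prove $K \cap D_N = \{\id\}$. Equivalently, no non-identity $g \in D_N$ should satisfy $g(z) \equiv z \pmod{(q)}$ for every $z \in \M_n$. The argument splits by whether $g$ is a rotation or a reflection.

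For a rotation $g(z) = \alpha z$, Lemma~\ref{lem:unit} gives $\alpha = \pm \xi^k$, and setting $z = 1$ produces $\alpha - 1 \in (q)$. If $\alpha = \xi^k$ with $k \not\equiv 0 \pmod n$, then $1 - \xi^k \in (q)$ is nonzero and $\ell \mid N_n(1 - \xi^k)$, and the product formula $\prod_{j=1}^{n-1} N_n(1 - \xi^j) = n^{\phi(n)}$ used in~\eqref{eqn:pp} forces $\ell \mid n^{\phi(n)}$, a contradiction. The case $\alpha = -1$ puts $2 \in (q)$, hence $\ell \mid N_n(2) = 2^{\phi(n)}$, again a contradiction. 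The only remaining possibility is $\alpha = -\xi^k$ with $n$ odd and $k \ne 0$; here I would multiply $1 + \xi^k \in (q)$ by $1 - \xi^k$ to obtain $1 - \xi^{2k} \in (q)$, observe that $n$ odd forces $2k \not\equiv 0 \pmod n$, and conclude once more $\ell \mid N_n(1 - \xi^{2k}) \mid n^{\phi(n)}$.

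For a reflection $g(z) = \beta \overline{z}$ with $\beta = \pm \xi^k$, I would test the congruence at $z = 1$ and $z = \xi$. The first yields $\beta - 1 \in (q)$; the second gives $\beta \xi^{-1} - \xi \in (q)$, which after multiplication by the unit $\xi$ becomes $\beta - \xi^2 \in (q)$. Subtracting produces $\xi^2 - 1 \in (q)$, a nonzero element since $n \ge 3$ in every case of~\eqref{eq:cls1}, and once more $\ell \mid N_n(1 - \xi^2) \mid n^{\phi(n)}$ via~\eqref{eqn:pp}.

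The only technical wrinkle I anticipate is the rotation subcase $\alpha = -\xi^k$ with $n$ odd, since the norm of $1 + \xi^k$ does not appear directly in~\eqref{eqn:pp}. Passing to $\alpha^2 - 1 = \xi^{2k} - 1$ by multiplying through by the Galois-conjugate factor $1 - \xi^k$ is the essential trick, and the oddness of $n$ is exactly what guarantees $\xi^{2k} \ne 1$ so that the resulting divisibility is a nontrivial bound.
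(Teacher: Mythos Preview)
Your proof is correct and follows the paper's strategy: assume some nontrivial $g \in K \cap D_N$, evaluate the congruence $g(z) \equiv z \pmod{(q)}$ at a test point to produce a nonzero element $1 \pm \xi^i \in (q)$, and conclude $\ell \mid 2^{\phi(n)}$ or $\ell \mid n^{\phi(n)}$ via the product formula~\eqref{eqn:pp}. The paper's version tests only at $z=1$ and leans on ``as in the proof of the previous lemma'' for the residual case $g(1)=1$ (the reflection $z \mapsto \bar z$); your explicit split into rotations and reflections, with the second test at $z=\xi$ yielding $1-\xi^2 \in (q)$, and your separate handling of $\alpha = -\xi^k$ for odd $n$ via $1-\xi^{2k}$, make those steps fully transparent.
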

\begin{proof}
Suppose there is $\id \ne g\in D_N$ which fixes the cosets, so
in particular $g((q)+1)=(q)+1$. This implies that $(q) \pm \xi^{i} =
(q) + 1$ for some integer $i$, and hence $1 \pm \xi^i \in (q)$. As in
the proof of the previous lemma, it follows then that $\ell \mid
N_n(1\pm\xi^i) = \beta$, where $\beta$ is either $2^{\phi(n)}$ or a
factor of $n^{\phi(n)}$. This is a contradiction, thus $K = T_{(q)}$. 
\end{proof}

\begin{table}
\caption{The cases $n=3,4,7,9$. Here, $j$ denotes the
  number of colourings with $\ell$ colours. Non-bracketed entries in
  the columns labelled $H$ and $K$ follow directly from results in
  this paper. Entries in brackets are computed by methods from
  \cite{pbeff}.}
\label{tab}
\begin{tabular}{|c|r|c|c|c|c|c|}
\hline\noalign{\smallskip}
$n$ & $\ell$ & $j$ & $H$ & $K$ & $q$ \\
\noalign{\smallskip}\hline\noalign{\smallskip}
3 & 3 & 1 & $G$ & $T_{(q)} \rtimes D_3$ & $1-\xi_3$ \\
  & 4 & 1 & $G$ & $T_{(q)} \rtimes C_2$ & $2$ \\
  & $> 4$ & * & * & $T_{(q)}$ & * \\
\hline
4 & 2 & 1 & $G$ & $T_{(q)} \rtimes D_4$ & $1-\xi_4$ \\
  & 4 & 1 & $G$ & $T_{(q)} \rtimes D_2$ & 2 \\
  & $> 4$ & * & * & $T_{(q)}$ & * \\
\hline
7 & 7 & 1 & $G$ & $T_{(q)} \rtimes D_7$ & $1- \xi_7$\\
  & 8 & 2 & \{$G'$\} & \{$T_{(q)} \rtimes C_2$\} & $1- \xi_7-\xi_7^3$\\
  & 29 & 6 & $G'$ & $T_{(q)}$ & $1- \xi_7 - \xi_7^2$\\
  & 43 & 6 & $G'$ & $T_{(q)}$ & $1- \xi_7 - \xi_7^2 - \xi_7^3$\\
  & 49 & 1 & $G$ & \{$T_{(q)}$\} & $(1- \xi_7)^2$\\
  & 56 & 2 & \{$G'$\} & $T_{(q)}$ & $(1- \xi_7)(1- \xi_7- \xi_7^3)$\\
  & 64 & 1 & $G$ & $T_{(q)} \rtimes C_2$,  & 2 \\
  & & 2 & \{$G'$\} & $T_{(q)}$  & $(1- \xi_7-\xi_7^3)^2$ \\
  & $> 64$ & * & * & $T_{(q)}$ & *\\
\hline
9 & 3 & 1 & $G$ & \{$T_{(q)} \rtimes D_9$\} & $1- \xi_9$\\
  & 9 & 1 & $G$ & \{$T_{(q)}$\} & $(1- \xi_9)^2$\\
  & 19 & 6 & $G'$ & $T_{(q)}$ & $1- \xi_9 - \xi_9^2$\\
  & 27 & 1 & $G$ & \{$T_{(q)}$\} & $1- \xi_9^3$\\
  & 37 & 6 & $G'$ & $T_{(q)}$ & $1- \xi_9 - \xi_9^3$\\
  & 57 & 6 & \{$G'$\} & $T_{(q)}$ & $(1- \xi_9)(1- \xi_9 - \xi_9^2)$\\ 
  & 64 & 1 & $G$ & $T_{(q)} \rtimes C_2$ & 2\\
  & $> 64$ & * & * & $T_{(q)}$  & *  \\
\noalign{\smallskip}\hline
\end{tabular}
\end{table}

Table \ref{tab} illustrates applications of the results of the last
two sections for the cases $n=3,4,7,9$. (These are exactly the values
of $n$ where $\phi(n) \in \{ 2,6 \}$. The cases $n=6, 14, 18$ are covered
implicitly.) This table can be seen as a complement to Table 4 in
\cite{pbeff}, where values of $n$ for which $\phi(n)=4$ are considered.
The entries in the second and third column follow from
\cite{bg}. Many entries in the fourth and fifth column follow
immediately from the results in this paper. Entries in brackets
require further computations (compare \cite{pbeff}), entries without
brackets are immediate.
Entries with an asterisk mean that there are multiple different
possibilities. The last column lists one (out of possibly more than
one) generator $q$ of a corresponding colouring. Note that for $n=7$,
there are three colourings with 64 colours. The colouring induced by
$(2)$ is perfect, while the other two colourings are not. 

\section{Application to quasiperiodic structures} \label{sec:5}

Consider a colouring $c$ of $\M_8$ with eight colours. There exists
exactly one such colouring \cite{bg}. By Corollary \ref{cor:one}, this
colouring is perfect. Therefore $H= \M_8 \rtimes D_8$.
Because of $N_8(1+\xi+\xi^2+\xi^3) = 8$, this colouring is defined by
the ideal $(q)=(1+\xi+\xi^2+\xi^3)$. Because of Theorem \ref{thm:bal}
and Lemma \ref{lem:bal}, $\overline{q}=1+\xi^7+\xi^6+\xi^5 \in
(q)$. Thus, $q+\overline{q}=2 \in (q)$. Hence the rotation by $\pi$
about $0$, which maps $1$ to $-1$, is contained in $K$.
The rotation by $\pi/2$ about 0 maps $1$ to $i$. The norm
of $1-i$ in $\M_8$ is $N_8(1-i)=4<8$, thus $1-i \notin (q)$. Therefore
$1$ and $i$ have different colours, and the rotation by $\pi/2$ and
thus the rotation by $\pi/4$ are not contained in $K$. Finally, the
reflection maps $\xi_8$ to $-\xi_8^3$. But $N_8(\xi_8+\xi_8^3)=4$
implying that $\xi_8$ and $-\xi_8^3$ have different colours. This
yields $K=T_{(q)} \rtimes{C_2}$ (where $C_2$ represents the rotation
by $\pi$).

Let us now describe how to illustrate this colouring $c$ and its
symmetries. Since $\M_8$ is dense in the plane, we want a
discrete subset of $\M_8$, which exhibits the colour symmetries
of $(\M_8,c)$. A colouring of such a subset is shown in Figure
\ref{fig:ab8col}. This set is well known from the theory of
aperiodic order: It is the vertex set of an Ammann Beenker tiling, see
\cite{gs} or \cite{sen2}. The symmetries discussed above are visible
in the image. 

Further examples of perfect and chirally perfect colourings of
quasiperiodic structures can be found in \cite{lip} (a 5-colouring of
the vertex set of the famous Penrose tiling, based on $\M_5$), in
\cite{bgs} (an 8-colouring of a quasiperiodic pattern based on
$\M_7$), in \cite{lueck} (several colourings based on $\M_n$ for
$n=4,6,8,10,12$),
and in \cite{pbeff} (a 4-colouring of the Ammann Beenker
tiling, based on $\M_8$). All these colourings arise from perfect
or chirally perfect colourings of $\M_n$.

\begin{figure}
\includegraphics[width=120mm,clip]{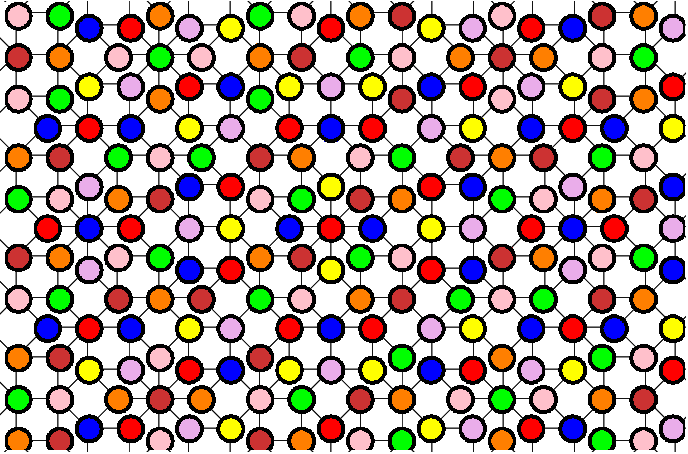}
\caption{\label{fig:ab8col} An 8-colouring of the vertices of the
  Ammann-Beenker tiling, arising from the 8-colouring of the
  underlying set $\M_8$.}
\end{figure}

\section{Conclusion} \label{sec:6}

Two classical special cases of colour symmetries are covered by our
approach, namely, the square lattice ($\M_4$) and the hexagonal
lattice ($\M_3$, resp.\ $\M_6$). These are discrete point sets. In
particular, we obtain Theorem 8.7.1 in \cite{gs} as a corollary, see
Corollary \ref{cor:z2}. All other cases ($n=5$, $n \ge 7$) yield point
sets $\M_n$ which are dense in the plane. In the case where $\M_n$ has
class number one, we obtained our main results. These are a necessary
and sufficient  condition for a colouring to be perfect (Theorem
\ref{thm:bal}). It allows the determination of the colour symmetry
group $H$ of $\M_n$ in general. In particular, it yields all perfect
colourings of $\M_n$. Moreover, for all but finitely
many cases, we determine the subgroup $K$ of $H$ of symmetries which
fix the coloured pattern: the lemmas in Section \ref{sec:4}
aid the derivation of $K$. A systematic way to
determine $K$ for a given ideal $\ell$-colouring would be
to check the conditions of Lemma \ref{lgleich2} ($\ell = 2$), Lemma
\ref{elllarge} ($\ell > 2^{\phi(n)}$), Lemma \ref{lem:q=2} and  Lemma
\ref{lem:normq=2n} ($\ell = 2^{\phi(n)}$), Lemma \ref{lem:lnprim}
($\ell = n$ prime), Lemma \ref{lem:lteiltnicht} ($\ell \nmid
2^{\phi(n)}$ and $\ell \nmid n^{\phi(n)}$). The remaining cases have
to be handled individually. This allows --- in principle --- to obtain
all colour preserving groups of (chirally) perfect colourings of
$\M_n$. 

For large $n$, the value $2^{\phi(n)}$ tends to be large, and it might be
tedious to handle the remaining cases individually. Nevertheless,
Lemma \ref{lem:lteiltnicht} seems to cover many of the remaining cases
of $\ell$, compare Table 5 in \cite{bg}. For instance, for $n=15$,
there are 11 cases for which $\ell \leq 2^{\phi(15)}=256$. Our results
cover 8 out of 11 cases, only three cases require further effort in
order to derive the colour preserving group $K$. To give another
example, for $n=16$, there are 23 cases for $\ell \leq
2^{\phi(16)}=256$, but 17 of them are covered by our results, and only
six cases have to be checked individually in order to determine the
group $K$.

\section*{Acknowledgements}
The authors are grateful to Michael Baake and Christian Huck for
helpful discussions. They wish to express their thanks to the
CRC 701 of the German Research Council (DFG). The research leading 
to these results has received funding from the European
Research Council under the European Union's Seventh Framework 
Programme (FP7/2007-2013) / ERC grant agreement no 247029.

\end{document}